\newtheorem{theorem}{Theorem}
\newtheorem{lemma}[theorem]{Lemma}
\newtheorem{proposition}[theorem]{Proposition}
\newtheorem{corollary}[theorem]{Corollary}
\newtheorem{remark}{Remark}
\author{Guillaume Chapuy}
\thanks{
CNRS -- LIAFA, Université Paris Diderot, Paris, France. 
Partial  support from \emph{Agence Nationale de la Recherche}, grant number ANR~12-JS02-001-01 ``Cartaplus'', and from the City of Paris, grant ``\'Emergences Paris 2013, Combinatoire \`a Paris''.
The author is currently visiting McGill University, School of Computer Science, Montr\'eal, Canada, and acknowledges partial support (and hospitality!) from Luc Devroye.
Email:~{\tt Guillaume.Chapuy@liafa.univ-paris-diderot.fr}.
}
\title[The asymptotic number of $12..d$ avoiding words of content $1^r2^r\dots n^r$]{
The asymptotic number of $12..d$-Avoiding Words with $r$ occurrences of each letter  $1,2, ..., n$.
}
\begin{document}
\begin{abstract}
Following Ekhad and Zeilberger (The Personal Journal of Shalosh B. Ekhad and Doron Zeilberger, Dec 5 2014; see also arXiv:1412.2035), we study the asymptotics for large $n$ of the number $A_{d,r}(n)$ of words of length $rn$ having $r$ letters $i$ for $i=1..n$, and having no increasing subsequence of length~$d$. We prove an asymptotic formula conjectured by these authors, and we give explicitly the multiplicative constant appearing in the result, answering a question they asked.
These two results should make the OEIS richer by 100+25=125 dollars.

 In the case $r=1$ we recover Regev's result for permutations. Our proof goes as follows: expressing $A_{d,r}(n)$ as a sum over tableaux via the RSK correspondence, we show that the only tableaux contributing to the sum are ``almost'' rectangular (in the scale $\sqrt{n}$). This relies on asymptotic estimates for the Kotska numbers $K_{\lambda,r^n}$ when $\lambda$ has a fixed number of parts. Contrarily to the case $r=1$ where these numbers are given by the hook-length formula, we don't have closed form expressions here, so to get our asymptotic estimates we rely on more delicate computations, via the Jacobi-Trudi identity and saddle-point estimates.
\end{abstract}
\maketitle

\vspace{-5mm}

\section{Introduction}

Fix integers $r\geq 1$, $d\geq 3$. For $n \geq 1$, we let $A_{d,r}(n)$ be the number of words of length $nr$ over the alphabet $\{1,2,\dots,n\}^*$, such that each letter $i\in \{1,2,\dots,n\}$ appears exactly $r$ times, and having no increasing subsequence of length $d$.
This quantity was introduced in \cite{EZ}. Our main result is the following theorem, conjectured in~\cite{EZ} (for aesthetic reasons we prefer to state the result for $A_{d+1,r}(n)$ rather than for $A_{d,r}(n)$).
\begin{theorem}\label{thm:main}
For fixed $d\geq 2$ and $r\geq 1$, the number of words of length $nr$ over $\{1,2,\dots,n\}^*$ in which each letter appears exactly $r$ times, and having no increasing subsequence of length $d+1$, satisfies when $n$ tends to infinity:
$$
A_{d+1,r}(n) \sim
C_{d+1,r}  
\cdot 
n^{-\frac{(d+1)(d-1)}{2}}
\left[d^r {r+d-1 \choose d-1}\right]^n
$$
with 
\begin{align*}
C_{d+1,r}
&=
\frac{\sqrt{d}\cdot \prod_{i=1}^{d-1} i!}{(2\pi)^{\frac{d}{2}-\frac{1}{2}}}
\left(\frac{d(d+1)}{r(2d+r+1)}\right)^{\frac{(d-1)(d+1)}{2}}.
\end{align*}
\end{theorem}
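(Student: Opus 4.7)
The plan is to combine the RSK correspondence with saddle-point analysis of Kostka numbers. By RSK, a word of content $r^n$ with no increasing subsequence of length $d+1$ corresponds bijectively to a pair $(P,Q)$ where $P$ is a semistandard Young tableau of shape $\lambda$ with at most $d$ rows and content $r^n$ and $Q$ is a standard Young tableau of the same shape, giving
\[
A_{d+1,r}(n)=\sum_{\substack{\lambda\vdash rn\\\ell(\lambda)\le d}}f^\lambda\,K_{\lambda,r^n}.
\]
The problem thus reduces to sharp joint asymptotics for $f^\lambda$ (accessible via the hook-length formula and Stirling) and for $K_{\lambda,r^n}$ (the harder ingredient).

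For $K_{\lambda,r^n}$ I would expand $s_\lambda$ via Jacobi--Trudi, $s_\lambda=\det(h_{\lambda_i+j-i})_{i,j=1}^d$, and extract the coefficient of $x_1^r\cdots x_n^r$. The key simplification is
\[
[x_1^r\cdots x_n^r]\prod_{i=1}^n\prod_{j=1}^d(1-z_jx_i)^{-1}=h_r(z_1,\dots,z_d)^n,
\]
which collapses the coefficient extraction to $d$ auxiliary variables. The alternating sum over $S_d$ then assembles into the contour integral
\[
K_{\lambda,r^n}=\frac{\pm 1}{(2\pi i)^d}\oint\!\cdots\!\oint\frac{h_r(z)^n\,\Delta(z)}{\prod_{i=1}^d z_i^{\lambda_i+d-i+1}}\,dz_1\cdots dz_d,\qquad \Delta(z)=\prod_{i<j}(z_j-z_i).
\]
Writing $\lambda_i=\tfrac{rn}{d}+x_i\sqrt n$ with $\sum_i x_i=0$, by symmetry the saddle lies at $z_1=\cdots=z_d=z^*$, and since $h_r(z^*,\dots,z^*)=\binom{r+d-1}{d-1}(z^*)^r$ the exponential value is $\binom{r+d-1}{d-1}^n$ independently of the choice of $z^*$; combined with the $d^{rn}$ exponential growth of $f^\lambda$ at a rectangular shape this already accounts for the predicted growth rate $\bigl[d^r\binom{r+d-1}{d-1}\bigr]^n$. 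Because the $d$ saddle points coalesce and $\Delta(z)$ vanishes at the coincident saddle, a local rescaling $z_i=z^*(1+\zeta_i/\sqrt n)$ turns the integral into a $d$-dimensional Gaussian integral with a Vandermonde insertion, from which the local $x$-dependence of $K_{\lambda,r^n}$ can be read off. This confirms that $f^\lambda K_{\lambda,r^n}$ concentrates on $\lambda$ within $O(\sqrt n)$ of the rectangle $(rn/d)^d$, as announced in the abstract.

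The last step is to convert the sum over $\lambda$ into a $(d-1)$-dimensional Riemann sum on the hyperplane $\{\sum x_i=0\}$ with lattice spacing $n^{-1/2}$, and to evaluate its limit. The hook-length formula for $f^\lambda$ contributes a Vandermonde factor $\Delta(x)$ in the $x_i$ (through the shifted-hook ratios) and a Gaussian factor coming from Stirling's expansion; the Kostka integral contributes a second $\Delta(x)$ from the coalescing-saddle analysis together with a compatible Gaussian. The combined limit is a Gaussian--Vandermonde-squared integral $\int|\Delta(x)|^2 e^{-Q(x)}\,dx$, whose explicit evaluation by Mehta's formula yields the $\prod_{i=1}^{d-1}i!/(2\pi)^{(d-1)/2}$ factor in $C_{d+1,r}$, while the residual prefactor $\bigl(d(d+1)/r(2d+r+1)\bigr)^{(d-1)(d+1)/2}$ records the determinant of the Hessian of the combined rate function at the rectangle. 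The main obstacle is obtaining \emph{uniform} control on $K_{\lambda,r^n}$ across all $\lambda$ with $\ell(\lambda)\le d$: restricting the sum rigorously to the Gaussian window around the rectangle requires a careful steepest-descent estimate in the coalescing-saddle regime, with the Vandermonde factor in the integrand—this is the most technical ingredient, while the final Mehta-type integral evaluation is essentially mechanical once concentration is established.
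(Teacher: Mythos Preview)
Your proposal is correct and follows essentially the same route as the paper: RSK, then Jacobi--Trudi to get a $d$-fold contour integral for $K_{\lambda,r^n}$, then a coalescing saddle-point analysis producing a Vandermonde-times-Gaussian local estimate, then concentration on shapes within $O(\sqrt n)$ of the rectangle, and finally a Mehta/Regev-type integral for the constant. The only cosmetic difference is that the paper treats $f^\lambda$ by applying its own Kostka estimate at $r=1$ rather than via the hook-length formula, and it makes the concentration step explicit through a three-region split (a Chernoff bound for shapes far from the rectangle, and a separate argument for shapes with near-collisions among the $\lambda_i$ where the Vandermonde degenerates).
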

This formula was conjectured in~\cite{EZ}, apart from the explicit value of the constant $C_{d+1,r}$ that was only conjectured for small values of $d$ and $r$.

Our first step in the proof of Theorem~\ref{thm:main} is the following remark, already made by the authors of \cite{EZ}. By the RSK correspondence (see for example~\cite{EC2}), we can express $A_{d+1,r}(n)$ as a sum over partitions of $rn$ having at most $d$ parts
\footnote{note that this equation is slightly wrong in \cite{EZ} (the inequality sign isn't correct).}
\begin{eqnarray}\label{eq:RSK}
A_{d+1,r}(n) =  \sum_{\lambda \vdash rn \atop \ell(\lambda)\leq d} f_\lambda g_{\lambda}^{(r)},
\end{eqnarray}
where $f_{\lambda}$ is the number of standard Young tableaux of shape $\lambda$, and where $g_{\lambda}^{(r)}$ is the number of semi-standard Young tableaux of shape $\lambda$ having content $1^r 2^r \dots n^r$. In other words, $g_{\lambda}^{(r)}$ is the Kotska number $K_{\lambda, r^n}$.

Note that in the case $r=1$, one has $f_\lambda=g_{\lambda}^{(r)}$. In this case, Regev~\cite{Regev} (see also \cite{M,Novak}) analysed the sum~\eqref{eq:RSK} by studying precisely the asymptotics of the numbers $f_\lambda$. In particular, he showed that for $r=1$, the sum \eqref{eq:RSK} is dominated by tableaux whose shape is ``close'' to the rectangular shape $(n/d, n/d, \dots, n/d)$, up to deviations of order $\sqrt{n}$. In the rest of this paper we are going to apply the same program for general~$r$.

\section{Estimates for the Kostka numbers $K_{\lambda, r^n}$}

We refer to \cite{EC2} for background on symmetric functions and for the terminology and notation we use here.
Let $\lambda=(\lambda_1,\lambda_2, \dots, \lambda_d)$ be a partition of $rn$ having at most $d$ parts (we complete $\lambda$ with zeros if $\ell(\lambda)<d)$. The Schur function $s_\lambda\equiv s_\lambda(x_1,x_2,\dots,x_n)$ can be expressed via the Jacobi-Trudi identity:
\begin{align}\label{eq:JT}
s_\lambda = \det \big( h_{\lambda_i+j-i} \big)_{1\leq i,j \leq d}
\end{align}
where $h_k$ is zero if $k<0$, and $h_k$ is the \emph{complete symmetric} function of the $x_i$ otherwise, \textit{i.e.}:
$$
h_k(x_1, x_2,\dots, x_n) = \sum_{1\leq i_1\leq i_2 \leq \dots\leq i_k \leq n} x_{i_1}x_{i_2}\dots x_{i_k}.
$$
Note that \eqref{eq:JT} is a determinant of \emph{fixed} sized $d$, even when $n$ tends to infinity (this will be crucial for us). Now recall that the Kostka number $K_{\lambda, r^n} = g_\lambda^{(r)}$ is equal to the coefficient of the monomial symmetric function $m_{1^r 2^r \dots n^r}$ in $s_\lambda$:
\begin{align}\label{eq:coeff}
g_{\lambda}^{(r)} = [x_1^r x_2^r \dots, x_n^r] s_\lambda(x_1, x_2, \dots, x_n).
\end{align}
Combining this equation with \eqref{eq:JT}, we will obtain what will be the starting point of our asymptotic estimates:
\begin{lemma}\label{lemma:Kot}
The Kostka number $g_{\lambda}^{(r)}=K_{\lambda,r^n}$ is given by the following formula:
\begin{align*}
g_\lambda^{(r)}= 
\frac{1}{(2i\pi)^{d}}\oint \oint \dots \oint  \frac{\big(h_r (x_1,x_2,\dots,x_{d})\big)^n}{x_1^{\lambda_1} x_2^{\lambda_2}\dots x_{d}^{\lambda_{d}}}
V(x_1,x_2,\dots, x_{d}) \frac{dx_1}{x_1} \frac{dx_2}{x_2} \dots \frac{dx_{d}}{x_d}
\end{align*}
where the $d$ contour integrals are taken along contours encircling $0$, and where the function $V$ is defined by:
$$
V(x_1,x_2,\dots, x_d)=\prod_{1\leq i < j \leq d}(1-\frac{x_j}{x_i}).
$$
\end{lemma}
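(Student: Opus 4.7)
The idea is to combine the Jacobi--Trudi expansion \eqref{eq:JT} with the coefficient extraction \eqref{eq:coeff} via a simple transposition trick that reduces the $n$-variable coefficient extraction to one in only $d$ variables. Expanding the determinant \eqref{eq:JT} as a signed sum over $S_d$ and applying \eqref{eq:coeff} produces
\begin{align*}
g_\lambda^{(r)} = \sum_{\sigma \in S_d} \mathrm{sgn}(\sigma) \cdot [x_1^r x_2^r \cdots x_n^r] \prod_{i=1}^{d} h_{\lambda_i + \sigma(i) - i}(x_1, \dots, x_n).
\end{align*}
The key observation is then that for any non-negative integers $k_1, \dots, k_d$, the coefficient $[x_1^r \cdots x_n^r] \prod_i h_{k_i}(x_1, \dots, x_n)$ counts $d \times n$ matrices of non-negative integers with row sums $(k_1, \dots, k_d)$ and all column sums equal to $r$. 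Transposing these matrices immediately yields
\begin{align*}
[x_1^r \cdots x_n^r] \prod_{i=1}^d h_{k_i}(x_1, \dots, x_n) = [y_1^{k_1} \cdots y_d^{k_d}] \, h_r(y_1, \dots, y_d)^n,
\end{align*}
trading the $n$-variable extraction for a $d$-variable one. This reduction is essential for the asymptotic analysis to come, where $d$ stays fixed while $n \to \infty$.

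Writing each $d$-variable coefficient extraction as a contour integral around $0$ and bringing the signed sum over $\sigma$ inside gives
\begin{align*}
g_\lambda^{(r)} = \frac{1}{(2i\pi)^d} \oint \cdots \oint h_r(y_1, \dots, y_d)^n \left( \prod_{i=1}^{d} y_i^{-\lambda_i + i - 1} \right) \det\bigl(y_i^{-j}\bigr)_{1 \le i,j \le d} \, dy_1 \cdots dy_d.
\end{align*}
It remains to recognize the antisymmetric determinant as essentially the function $V$. Factoring $y_i^{-d}$ from the $i$-th row and then reversing the column order produces $\det(y_i^{-j}) = V(y) \prod_i y_i^{-i}$, the two sign factors coming from the column reversal and from $\prod_{i<j}(y_i - y_j) = V(y) \prod_i y_i^{d-i}$ cancelling. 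Substituting and collecting powers of $y_i$ yields the formula in the lemma (after renaming $y \to x$). The only genuinely non-formal step in the argument is the transposition trick; the rest is symbolic bookkeeping, and I expect no real obstacle.
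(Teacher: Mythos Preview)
Your proof is correct and follows essentially the same route as the paper's: Jacobi--Trudi expanded over $S_d$, the same ``transposition'' identity (which the paper phrases as a balls-and-urns count), Cauchy's integral formula, and recognition of $\sum_\sigma \mathrm{sgn}(\sigma)\prod_i y_i^{-\sigma(i)}=\det(y_i^{-j})$ as a Vandermonde. Your identification $\det(y_i^{-j})=V(y)\prod_i y_i^{-i}$ is exactly what the paper's final line ``evaluating the Vandermonde determinant'' amounts to.
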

\noindent Note that the ``nice property'' in the last lemma is that the number of integrals is $d$, and does not depend on $n$. So the last formula is maybe not as nice as, say, a hook-length formula, but it is not too bad. The ``less nice property'' is that because of the Vandermonde, the integrand is not a series with positive coefficients (which will make the asymptotics more delicate).
\begin{proof}
This lemma may be already known, and we don't know if it has already been used elsewhere. We are going to prove it with the Jacobi Trudi formula.

First, combining the Jacobi-Trudi formula~\eqref{eq:JT} and Equation~\eqref{eq:coeff}, we have:
\begin{align}\label{eq:int1}
g_\lambda^{(r)} &= [x_1^rx_2^r\dots x_n^r] \det \big( h_{\lambda_i+j-i} \big)_{1\leq i,j \leq d}\\
 &= \sum_{\sigma \in \mathfrak{S}_d} \epsilon(\sigma) [x_1^rx_2^r\dots x_n^r] 
\prod_{i=1}^d h_{\lambda_i+\sigma_i-i} (x_1, x_2, \dots, x_n).\nonumber
\end{align}
Now, for integers $k_1, k_2, \dots, k_d$ of total sum $rn$, we have:
$$ 
[x_1^rx_2^r\dots x_n^r] \prod_{i=1}^d h_{k_i} (x_1,x_2,\dots, x_n)  
= [x_1^{k_1}x_2^{k_2}\dots x_d^{k_d}] \big(h_r(x_1,x_2,\dots, x_d)\big)^n.
$$
Indeed it is easy to see combinatorially that both sides count the number of ways of distibuting $r$ undistinguishable copies of each letter in $\{1,2,\dots n\}$ in $d$ urns of  respective sizes $k_1, k_2, \dots, k_d$. 
%Finally note that since $h_r$ is homogeneous of degree $r$ we have
%\begin{align*}
%[x_1^{k_1}x_2^{k_2}\dots x_d^{k_d}] \big(h_r(x_1,x_2,\dots, x_d)\big)^n
%&=[x_1^{k_1}x_2^{k_2}\dots x_{d-1}^{k_{d-1}}] \big(h_r(x_1,x_2,\dots, x_{d-1},1)\big)^n.
%\end{align*}
%Returning to~\eqref{eq:int1}, and 
Using Cauchy's integral formula, we have:
\begin{align*}
g_\lambda^{(r)} & = \sum_{\sigma \in \mathfrak{S}_d} \epsilon(\sigma) [x_1^{\lambda_1+\sigma_1-1}x_2^{\lambda_2+\sigma_2-2}\dots x_{d}^{\lambda_{d}+\sigma_{d}-d}] \big(h_r(x_1,x_2,\dots, x_{d})\big)^n\\
& = \sum_{\sigma \in \mathfrak{S}_d} \epsilon(\sigma) 
\frac{1}{(2i\pi)^{d}}\oint\oint\dots\oint \frac{\big(h_r(x_1,x_2,\dots, x_{d})\big)^n}{x_1^{\lambda_1+\sigma_1}x_2^{\lambda_2+\sigma_2-1}\dots x_{d}^{\lambda_{d}+\sigma_{d}+1-d}}
dx_1dx_2\dots dx_{d}\\
& = 
\frac{1}{(2i\pi)^{d}}\oint\oint\dots\oint \frac{\big(h_r(x_1,x_2,\dots, x_{d})\big)^n}{x_1^{\lambda_1}x_2^{\lambda_2-1}\dots x_{d}^{\lambda_{d}-d+1}}
\left(
\sum_{\sigma \in \mathfrak{S}_d} \epsilon(\sigma) 
\prod_{i=1}^{d} x_{i}^{-\sigma_i}
\right)dx_1dx_2\dots dx_{d}\\
%& = 
%\oint\oint\dots\oint \frac{\big(h_r(x_1,x_2,\dots, x_{d-1},1)\big)^n}{x_1^{\lambda_1}x_2^{\lambda_2-1}\dots x_{d-1}^{\lambda_{d-1}-d+2}}
%\det \left( x_{i}^{-j}
%\right)_{1\leq i,j\leq d} dx_1dx_2\dots dx_{d-1}.
\end{align*}
The lemma follows by evaluating the Vandermonde determinant.
\end{proof}
We now note that, since $h_r$ is homogeneous, we can reduce the integral to a $(d-1)$-dimensional integral. An obvious way to do that would be to write:
\begin{align*}
g_\lambda^{(r)}= 
\frac{1}{(2i\pi)^{d-1}}\oint \oint \dots \oint  \frac{\big(h_r (x_1,x_2,\dots,x_{d-1},1)\big)^n}{x_1^{\lambda_1} x_2^{\lambda_2}\dots x_{d-1}^{\lambda_{d-1}}}
V(x_1,x_2,\dots, x_{d-1},1) \frac{dx_1}{x_1}\frac{dx_2}{x_2} \dots \frac{dx_{d-1}}{x_{d-1}}.
\end{align*}
However we prefer not to break the (anti)symmetry of the formula, for reasons that will be clear later, so we will proceed as follows.
Let $(x_1, x_2, \dots, x_d) \in \mathbb{R}_{>0}^d$. We have from the previous lemma:
\begin{align*}
g_\lambda^{(r)}= 
\frac{1}{(2\pi)^{d}}\int d\theta \frac{\big(h_r (x_1e^{i\theta_1},x_2e^{i\theta_2},\dots,x_{d}e^{i\theta_d})\big)^n}{x_1^{\lambda_1} x_2^{\lambda_2}\dots x_{d}^{\lambda_{d}} e^{i \sum_{j=1}^d \lambda_j \theta_j}}
\prod_{j<k}\left(1-\frac{x_j}{x_k} e^{i(\theta_j -\theta_k)}\right)
\end{align*}
where the integral is taken over $[-\pi,\pi]^{d}$.
Now we make the change of variables 
$$\nu_0 = \frac{d-1}{d} \sum_{j=1}^d \theta_j \; ; \; 
\nu_k =\theta_k -\frac{1}{d}\sum_{j=1}^d \theta_j \mbox{ for } k<d,
$$
whose inverse is given by $\theta_k=\frac{1}{d-1}\nu_0+\nu_k$ if $k\leq d$, where we use the notation $\nu_d:=-(\nu_1+\nu_2+\dots+\nu_{d-1})$. Using that $h_r$ is homogeneous of degree $r$, the last integral is equal to: 
\begin{align*}
g_\lambda^{(r)}= 
\frac{1/(d-1)}{(2\pi)^{d}}\int d\nu \frac{\big(h_r (x_1e^{i\nu_1},x_2e^{i\nu_2},\dots, x_{d}e^{i\nu_d})\big)^ne^{irn\nu_0/(d-1)}}{x_1^{\lambda_1} x_2^{\lambda_2}\dots x_{d}^{\lambda_{d}} e^{i \sum_{j=1}^d \lambda_j (\nu_j+\frac{1}{d-1}\nu_0)}}
\prod_{1\leq j<k \leq d}\left(1-\frac{x_j}{x_k} e^{i(\nu_j -\nu_k)}\right)
\end{align*}
where the integral is over $(\nu_0, \nu_1, \dots, \nu_{d-1})$
in $[-(d-1)\pi,(d-1)\pi]\times [-\pi,\pi]^{d-1}$, and where the factor $1/(d-1)$ comes from the Jacobian of the change of variables.
We observe that, since $\lambda$ is a partition of $rn$, the integrand is independent of $\nu_0$, so that we can integrate with respect to $\nu_0$. We get, changing back the name of our variables from ``$\nu$'' to~``$\theta$'':
\begin{align*}%\label{eq:intReady}
g_\lambda^{(r)}= 
\frac{1}{(2\pi)^{d-1}}\int_\Lambda d\theta \frac{\big(h_r (x_1e^{i\theta_1},x_2e^{i\theta_2},\dots, x_{d}e^{i\theta_d})\big)^n}{(x_1e^{i\theta_1})^{\lambda_1} (x_2e^{i\theta_2})^{\lambda_2}\dots(x_{d}e^{i\theta_d})^{\lambda_{d}} }
\prod_{1\leq j<k \leq d}\left(1-\frac{x_je^{i\theta_j}}{x_ke^{i\theta_k}}\right)
\end{align*}
where the $(d-1)$-dimensional integral is over $(\theta_1, \dots, \theta_{d})$ under the projection $d\theta$ of the Lebesgue measure on the ``hyperplane'' 
$$\Lambda:=\{\theta\in[-\pi,\pi]^d, \; \theta_1+\theta_2+\dots+\theta_{d}=0 [2\pi]\}.$$
% and where we use the notation $\theta_d=-(\theta_1+\theta_2+\dots+\theta_{d-1})$.
More precisely $d\theta := d\theta_1 d\theta_2\dots d\theta_{d-1}$, although this way of writing it hides the fact that it is fully symmetric in the $d$ coordinates.

Now, note that in the last integral, because $h_r$ is homogeneous, the integrand remains unchanged if we simultaneously shift all $\theta_i$ by $\frac{2k\pi}{d}$ for some integer $0\leq k < d$ (and such a shift stabilizes $\Lambda$). 
We can thus decompose $\Lambda$ into $d$ subsets according to the class of $\theta_1$ modulo $\frac{2\pi}{d}$, and these $d$ subsets give the same contribution. We finally obtain:
\begin{corollary}
The Kostka number $g_{\lambda}^{(r)}=K_{\lambda,r^n}$ is given by the following formula:
\begin{align}\label{eq:intReady}
g_\lambda^{(r)}= 
\frac{d}{(2\pi)^{d-1}}\int_\Theta d\theta \frac{\big(h_r (x_1e^{i\theta_1},x_2e^{i\theta_2},\dots, x_{d}e^{i\theta_d})\big)^n}{(x_1e^{i\theta_1})^{\lambda_1} (x_2e^{i\theta_2})^{\lambda_2}\dots(x_{d}e^{i\theta_d})^{\lambda_{d}} }
\prod_{1\leq j<k \leq d}\left(1-\frac{x_je^{i\theta_j}}{x_ke^{i\theta_k}}\right)
\end{align}
with
$$\Theta:=\left\{\theta\in[-\pi,\pi)^{d}, \; \theta_1+\theta_2+\dots+\theta_{d}=0[2\pi]\,,\, -\frac{\pi}{d}\leq\theta_1<\frac{\pi}{d}\right\}
$$
and $d\theta=d\theta_1 d\theta_2\dots d\theta_{d-1}$ is the (scaled) Lebesgue measure on $\Theta$.
\end{corollary}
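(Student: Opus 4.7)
The plan is to derive the corollary from Lemma~\ref{lemma:Kot} by two successive reductions of the integration domain: a continuous reduction from $d$ to $d-1$ dimensions, using homogeneity of $h_r$ together with $|\lambda|=rn$, followed by a discrete reduction picking up a factor of $d$ from the cyclic symmetry $\theta_j\mapsto\theta_j+\frac{2\pi}{d}$ of the integrand.

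For the first step, I would parametrize each contour in Lemma~\ref{lemma:Kot} as a circle of arbitrary positive radius $x_j$, writing $x_j\mapsto x_je^{i\theta_j}$ with $(\theta_1,\dots,\theta_d)\in[-\pi,\pi]^d$, and then introduce the linear change of variables $\nu_0=\frac{d-1}{d}\sum_j\theta_j$ and $\nu_k=\theta_k-\frac{1}{d}\sum_j\theta_j$ for $k<d$ (with $\nu_d:=-\sum_{k<d}\nu_k$), which has inverse $\theta_k=\frac{1}{d-1}\nu_0+\nu_k$ and Jacobian $1/(d-1)$. Because $h_r$ is homogeneous of degree $r$, the numerator contributes a global phase $e^{irn\nu_0/(d-1)}$; the monomial denominator contributes $e^{-i\nu_0\sum_j\lambda_j/(d-1)}$; and the Vandermonde-like product depends only on the differences $\theta_j-\theta_k=\nu_j-\nu_k$, hence not on $\nu_0$. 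The condition $\sum_j\lambda_j=rn$ makes the two $\nu_0$-phases cancel exactly, so integrating out $\nu_0$ over its length-$2\pi(d-1)$ range, combined with the Jacobian, produces an overall factor of $2\pi$ that consumes one of the $2\pi$'s in the prefactor.

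For the second step, I would observe that simultaneously shifting $\theta_j\mapsto\theta_j+\frac{2k\pi}{d}$ for $0\leq k<d$ stabilizes the hyperplane $\Lambda=\{\sum_j\theta_j\equiv 0\pmod{2\pi}\}$ (because $d\cdot\frac{2k\pi}{d}\equiv 0\pmod{2\pi}$) and, for exactly the same reason as above (homogeneity of $h_r$ plus $|\lambda|=rn$), leaves the integrand unchanged. Partitioning $\Lambda$ according to the class of $\theta_1$ modulo $\frac{2\pi}{d}$ therefore decomposes the integral into $d$ equal contributions, and restricting to the fundamental domain $-\frac{\pi}{d}\leq\theta_1<\frac{\pi}{d}$ produces the domain $\Theta$ and the compensating prefactor $d$ appearing in~\eqref{eq:intReady}.

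I expect the only real obstacle to be bookkeeping: correctly tracking the $1/(d-1)$ Jacobian, the length $2\pi(d-1)$ of the $\nu_0$-range, and the factor $d$ from the cyclic symmetry, while verifying that every cancellation rests on exactly one of the two facts $|\lambda|=rn$ and homogeneity of $h_r$. These same two facts power both reductions, which is what makes the argument go through cleanly in two essentially identical moves.
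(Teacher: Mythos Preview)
Your proposal is correct and follows essentially the same route as the paper: the same linear change of variables $(\theta_j)\mapsto(\nu_0,\nu_1,\dots,\nu_{d-1})$ to integrate out $\nu_0$ via homogeneity and $|\lambda|=rn$, followed by the same $\mathbb{Z}/d\mathbb{Z}$ cyclic-shift symmetry to restrict $\theta_1$ to $[-\pi/d,\pi/d)$ and pick up the factor $d$. The bookkeeping you flag (Jacobian $1/(d-1)$, $\nu_0$-range of length $2\pi(d-1)$, and the factor $d$) is exactly what the paper tracks as well.
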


\noindent Now we start the asymptotic work. We are going to use the last formula and the saddle point method to obtain  precise asymptotic estimates of $g_\lambda^{(r)}$ when $\lambda$ is ``close'' or ``moderately close'' to be a rectangular shape -- large deviation estimates saying that $g_\lambda^{(r)}$ is small (in comparison) if $\lambda$ is ``far'' from a rectangular shape will be obtained later by more elementary means.

\subsection{Saddle-point estimates}

\newcommand{\xx}{\mathbf{x}}
\newcommand{\yy}{\mathbf{y}}
\newcommand{\zz}{\mathbf{z}}
\newcommand{\CC}{\mathbb{R}}
\newcommand{\CCS}{\mathbb{R}_0}
\newcommand{\CCP}{\mathbb{R}_\perp}

%As we remarked above, we have chosen to work with $d$ integrals whereas the integral given by the previous lemma is ``intrisically'' $(d-1)$-dimensional. So we will have to be careful about this slightly singular situation.
We decompose $\CC^d$ as the direct sum $\CC^d= \CCS \oplus \CCP$, where $\CCS$ is the set of vectors of sum zero, and $\CCP$ is the complex line generated by $(1,1,\dots,1)$.
In $\CC^d$ we will also consider the affine hyperplane $\CC_r$ consisting of vectors whose coordinates add up to $r$:
$$
\CC_r:=\{(y_1,y_2,\dots,y_d)\in\CC^d\;:\;y_1+y_2+\dots+y_d=r\}.
$$
Let us fix $\yy=(y_1,y_2,\dots, y_d) \in \CC_r$.
%Let us fix $(y_1, y_2, \dots, y_{d-1})\in \mathbb{R}_{>0}^{d-1}$, such that $\sum_{i=1}^{d-1} y_i <r$.
%We let $\lambda^y:=(y_1 \cdot n, y_2 \cdot n, \dots, y_d \cdot n )$ and we assume that $y$ is such that $\lambda$ is an integer partition of $rn$. 
For $\xx=(x_1,x_2,\dots, x_{d})\in\CC^d$ we define 
$$g(\xx) = \frac{h_r(\xx)}{x_1^{y_1}x_2^{y_2} \dots x_{d}^{y_{d}}}.$$
%Lemma \ref{lemma:contour} says that 
%We will be interested in the $d$-dimensional integral:
%$$
%I(\yy) =\frac{1}{(2i\pi)^{d}} \oint\oint\dots \oint
%V(\xx) g(\xx)^n d\xx,
%$$
%with $V(\xx)$ is defined as in the previous lemma.	
%Note that Lemma~\ref{lemma:contour} says that $g_\lambda^{(r)} = I(y)$ where $y_i=\lambda_i/n$ for $i\in [1..d]$, which is why we are interested in the quantity $I(y)$.

%It is natural to study this integral using the saddle-point method.
In order to study the last integral, we introduce the \emph{saddle-point} system, in the ``unknown'' $\xx^0=(x^0_1,x^0_2, \dots, x^0_{d})$:
\begin{align}\label{eq:saddle}
\left.\frac{\partial}{\partial x_i} g(\xx)\right|_{\xx=\xx^0}=0, \mbox{ for } 1\leq i \leq d.
\end{align}
We note by an explicit computation that \eqref{eq:saddle} is equivalent to:
\begin{align}\label{eq:saddle2}
y_j h_r(\xx^0) = x^0_j \left.\frac{\partial h_r(\xx)}{\partial x_j}\right|_{\xx=\xx^0}, \mbox{ for } 1\leq i \leq d, 
\end{align}
%where we use the notation $h_r(\xx)=h_r(x_1,x_2,\dots, x_{d-1},1)$.
Note that since $h_r$ is homogeneous of degree $r$, we have:
$$
\sum_{j=1}^d x_j \frac{\partial h_r(\xx)}{\partial x_j}=r h_r(\xx),
$$
hence the assumption we made, that $\yy\in\CC_r$, is necessary for this system to have a solution.

\begin{lemma}
For $\yy = (r/d, r/d, \dots, r/d)$, the point $\xx^0 = (1,1,\dots,1)$ is a solution of the saddle-point equations~\eqref{eq:saddle2}.
Moreover, there exists a vicinity $V$ of $(r/d, r/d, \dots, r/d)$ in $\CC_r$ such that for $\yy \in V$, the saddle-point system $\eqref{eq:saddle2}$ has a unique solution $\xx^0=\xx^0(\yy)$. Moreover, the mapping $\yy \mapsto \xx^0(\yy)$ is analytic on $V$.
\end{lemma}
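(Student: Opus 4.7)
The first assertion is a direct consequence of the $S_d$-symmetry of $h_r$ and Euler's identity. By symmetry, the value $\partial_j h_r(1,\dots,1)$ is independent of $j$; Euler's identity $\sum_j x_j \partial_j h_r = r\, h_r$ then forces this common value to equal $\frac{r}{d} h_r(1,\dots,1)$. Substituting $y_j = r/d$ and $x_j^0 = 1$ into \eqref{eq:saddle2} yields the equality.

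For the second assertion I would apply the holomorphic implicit function theorem to the map $F = (F_1,\dots,F_d)$ defined by $F_j(\mathbf{x},\mathbf{y}) := y_j h_r(\mathbf{x}) - x_j \partial_j h_r(\mathbf{x})$. Two features of the setup must be accounted for first. On the $\mathbf{y}$ side, the parameter lives on the affine hyperplane $\mathbb{C}_r$ of dimension $d-1$; on the $\mathbf{x}$ side, scale-invariance of the system (since $h_r$ is homogeneous of degree $r$ and $\sum_j y_j = r$) means that $\sum_j F_j \equiv 0$ is a trivial relation and solutions can only be unique modulo the rescaling $\mathbf{x}^0 \mapsto t \mathbf{x}^0$. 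I would therefore fix a normalization, for instance $x_d = 1$, and look for a solution of the $(d-1)$-dimensional system $F_1 = \dots = F_{d-1} = 0$ in the unknowns $x_1, \dots, x_{d-1}$.

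The key computation is then the Jacobian $M = (\partial F_j/\partial x_k)_{1 \leq j,k \leq d}$ at the symmetric point. Writing $a := \partial_j h_r(1,\dots,1)$, $b := \partial_j \partial_k h_r(1,\dots,1)$ for $j \neq k$, and $c := \partial_j^2 h_r(1,\dots,1)$ (all well-defined by symmetry), a direct computation gives $M = \alpha I + \gamma J$ with $\alpha = db - ra$ and $\gamma = (r/d) a - b$, where $J$ is the all-ones matrix. Applying Euler's identity to $\partial_j h_r$ (homogeneous of degree $r-1$) gives the auxiliary relation $c = (r-1)a - (d-1)b$, from which one also recovers the expected scale-invariance $\alpha + d\gamma = 0$. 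Using the closed forms $a = \frac{r}{d}\binom{d+r-1}{r}$ and $b = \binom{d+r-3}{r-2}$ (with $b=0$ when $r=1$), a short manipulation shows that both $\alpha$ and $\gamma$ are nonzero---in fact $\alpha < 0$ and $\gamma > 0$---for every $d, r \geq 1$.

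Consequently the submatrix of $M$ with rows and columns indexed by $\{1,\dots,d-1\}$ is $\alpha I_{d-1} + \gamma J_{d-1}$, whose eigenvalues are $\alpha$ (multiplicity $d-2$) and $\alpha + (d-1)\gamma = -\gamma$ (using $\alpha + d\gamma = 0$), both nonzero. The holomorphic implicit function theorem then supplies a unique analytic solution $\mathbf{y} \mapsto (x_1^0(\mathbf{y}), \dots, x_{d-1}^0(\mathbf{y}), 1)$ on a neighborhood $V$ of $(r/d,\dots,r/d)$ in $\mathbb{C}_r$, and undoing the normalization gives the asserted (projective) uniqueness and analyticity. The main obstacle is the explicit verification that $\alpha$ and $\gamma$ do not vanish; everything else reduces to symmetry and a routine application of the implicit function theorem.
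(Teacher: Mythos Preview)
Your proof is correct and follows essentially the same strategy as the paper: verify the symmetric solution directly, then compute the Jacobian at the symmetric point and invoke an implicit-function-type result. The only real difference is cosmetic: the paper packages the system as the map $\phi(\xx)_j = \frac{x_j}{h_r(\xx)}\partial_j h_r(\xx)$ and appeals to the constant rank theorem (the Jacobian of $\phi$ at $(1,\dots,1)$ has eigenvalues $\alpha-\beta\neq 0$ on $\CCS$ and $0$ on $\CCP$), whereas you break the scale invariance by hand via the slice $x_d=1$ and apply the ordinary implicit function theorem to $F_1,\dots,F_{d-1}$. Both reductions are equivalent and lead to the same linear-algebra check.

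One small slip: your closed form for $b=\partial_j\partial_k h_r(1,\dots,1)$ is incorrect. From the generating function $\sum_r h_r t^r=\prod_i (1-x_it)^{-1}$ one gets $b=[t^{r-2}](1-t)^{-(d+2)}=\binom{d+r-1}{d+1}=\frac{r(r-1)}{d(d+1)}h_r(1,\dots,1)$, not $\binom{d+r-3}{r-2}$. Fortunately this does not affect your argument: with the correct value (and $a=\frac{r}{d}h_r(1,\dots,1)$) one still finds $\alpha=db-ra=-\frac{r(d+r)}{d(d+1)}h_r(1,\dots,1)<0$ and $\gamma=\frac{r}{d}a-b=\frac{r(d+r)}{d^2(d+1)}h_r(1,\dots,1)>0$, so the $(d-1)\times(d-1)$ submatrix is indeed nonsingular. (Also note that in this paper $\CC_r$ is a \emph{real} affine hyperplane, so ``analytic'' means real-analytic; your invocation of the implicit function theorem goes through unchanged.)
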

\begin{proof}
First, for $\xx=(1,1,\dots, 1)$ one has $h_r(\xx)={r+d-1 \choose d-1}$ and $x_j \frac{\partial h_r(\xx)}{\partial x_j} = {r+d-1 \choose r-1}$.
Since ${r+d-1 \choose d-1} = \frac{d}{r}  {r+d-1 \choose r-1}$, this shows that for $\yy = (r/d, r/d, \dots, r/d)$, the point $\xx=(1, 1, \dots, 1)$ is indeed a solution of~\eqref{eq:saddle2}.

To prove the remaining assertion, we use the implicit function theorem (or more precisely the constant rank theorem since here we are actually in a $(d-1)$-dimensional situation inside a $d$-dimensional space). 
Consider the mapping  $\phi: U \rightarrow \CC^d$ given by $\xx \stackrel{\phi}{\mapsto} (z_1, z_2, \dots, z_{d})$ with $z_j:= \frac{x_j}{h_r(\xx)}\frac{\partial h_r(\xx)}{\partial x_j}$, where $U$ is a neighborhood of $(1,1,\dots,1)$ in $\CC^{d}$. Note that $\phi(U)\subset \CC_r$. Moreover, the saddle-point equations~\eqref{eq:saddle2} say that $\yy = \phi(\xx^0)$. 
%Therefore what we want to prove is that $\phi$ has an inverse (analytic) function in a vicinity of $(1, 1, \dots, 1)$.
Now the Jacobian matrix of $\phi$ is given by:
$$
J(\xx)  = \left(\frac{\partial z_j}{\partial x_i}\right)_{1\leq i,j\leq d-1}
\mbox{ where } 
\frac{\partial z_j}{\partial x_i}=
\left\{
\begin{array}{ll}
\frac{(h_r^{j}+x_j h_r^{jj})h_r-x_j (h_r^j)^2}{(h_r)^2},& i=j,\\ 
\frac{(x_j h_r^{ij})h_r-x_j h_r^i h_r^j}{(h_r)^2},& i\neq j, 
\end{array}
\right.
$$
where functions are evaluated at $\xx$ and exponents of $h_r$ indicate partial derivatives, for example $h_r^{j}=\frac{\partial}{\partial x_j} h_r (\xx)$.
At the point $\xx=(1, 1, \dots, 1)$, we have from routine calculations that $h_r^i = \frac{r}{d} h_r$ for each $i$, and moreover $h_r^{ii} = \frac{2r(r-1)}{d(d+1)} h_r$ and $h_r^{ij} = \frac{r(r-1)}{d(d+1)} h_r$ for $i\neq j$. It follows that:
\begin{align}\label{eq:jacob}
\frac{\partial z_j}{\partial x_i}(1,1,\dots, 1)=
\left\{
\begin{array}{ll}
\alpha:=\frac{r(d-1)(d+r)}{d^2(d+1)},& i=j,\\ 
\beta:=-\frac{r(d+r)}{d^2(d+1)},& i\neq j. 
\end{array}
\right.
\end{align}
Therefore $J(1,1,\dots,1) = (\alpha -\beta) id + \beta N$ where $N$ is the all-one matrix of size~$d$. Therefore the eigenvalues of $J(1,1,\dots, 1)$ are $\alpha-\beta\neq 0$ ($d-1$ times) and $\alpha+(d-1)\beta=0$ (once). The two corresponding eigenspaces are respectively $\CCS$ 
%(or more precisely $\CC_r$ if we want to think about it as a tangent space in the ambient $\CC^d$) 
and $\CCP$.
From the constant rank theorem, it follows 
% and they are all nonzero, which shows that $J(1,1,\dots,1)$ is invertible.
%It follows that the Jacobian matrix $J(\xx)$ is inversible for $\xx$ in a neighbourhood of $(1, 1, \dots, 1)$ and has an analytical inverse. 
%From the implicit function theorem, this proves 
that the mapping $\phi$ is a locally analytic, invertible map, between a neighborhood of $(1,1,\dots,1)$ in a $(d-1)$-dimensional subvariety of of $\CC^d$ containing $(1,1,\dots,1)$, and a neighbourhood of $(r/d, r/d,  \dots, r/d)$ in $\CC_r$. This concludes the proof.
\end{proof}

If $\zz=(z_1,z_2,\dots, z_d)$ is an element of $\mathbb{R}^d$, we let $\mathrm{gap}(\zz) = \min_{1\leq i< j\leq d} |z_i-z_j|$.
\begin{proposition}\label{prop:uniform}
Fix $\gamma>0$, and 
consider the set 
$L_{\gamma}\subset \mathbb{R}^{d}$ defined by:
$$L_{\gamma} = 
\left\{
\begin{array}{c}
(\zz\in\mathbb{R}^d: \; z_1+z_2+\dots+z_d =0\;;\; 
\mathrm{gap}\big(\zz\big) >\gamma \;; 
 \; \max_{1\leq i \leq d} |z_i|<n^{1/8}
\end{array}
\right\}.$$
Let $\zz \in L_{\gamma}$ and let
$\lambda=(\lambda_1, \lambda_2,\dots, \lambda_d)$ with $\lambda_i = \frac{rn}{d}+z_i \sqrt{n}$. Assume that $\zz$ is such that $\lambda$ is an integer partition (of $rn$).
%
%
%There exists a neighborhood $W\subset V$ of $(r/d,r/d, \dots,r/d)$ in $\mathbb{R}^{d-1}$ such that, for $y=(y_1,y_2,\dots, y_{d-1})\in W$, with $y_1>y_2>\dots>y_{d-1}$ we have:
Then when $n$ tends to infinity, we have:
$$
g_\lambda^{(r)} = \frac{\sqrt{d}\alpha'^{\frac{(d+1)(d-1)}{2}}{d+r-1 \choose d-1}^nn^{-\frac{(d+2)(d-1)}{4}}}{(2\pi)^{\frac{d-1}{2}}}e^{-\frac{1}{2}Q(\zz)} \prod_{1\leq i<j \leq d}(z_j-z_i)  (1+o(1))
$$
where $Q(\zz):=\alpha'\sum_{i=1}^d z_i^2$, where $\alpha'=\frac{d(d+1)}{r(d+r)}$, and where the little-o is \emph{uniform} for $\zz\in L_{\gamma}$.
\end{proposition}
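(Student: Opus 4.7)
The plan is to substitute the saddle-point radii $\xx=\xx^0(\yy)$ corresponding to $\yy:=\lambda/n=(r/d,\ldots,r/d)+\zz/\sqrt n$ into the integral representation \eqref{eq:intReady}. For $n$ large enough the preceding lemma applies uniformly on $L_\gamma$ (since $|\zz|/\sqrt n=O(n^{-3/8})\to 0$), and combining the implicit function theorem with the Jacobian computation \eqref{eq:jacob} gives
$$\xx^0(\yy)=(1,\ldots,1)+\alpha'\frac{\zz}{\sqrt n}+O\!\left(\frac{|\zz|^2}{n}\right).$$
The $\theta$-integral is then split into a central part $|\theta|\le\delta_n:=n^{-2/5}$ and a tail. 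The tail is controlled via $|h_r(\xx^0 e^{i\theta})|\le h_r(\xx^0)$ (since $h_r$ has nonnegative coefficients), strict for $\theta\in\Theta\setminus\{0\}$ with a quadratic bound near $0$ uniform in $\xx^0$ close to $(1,\ldots,1)$, yielding a tail contribution of order $e^{-cn^{1/5}}$, negligible.

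On the central part I would rescale $\theta=\phi/\sqrt n$ and Taylor-expand each of the three factors of the integrand. Writing $F_n(\theta):=n\log g(\xx^0 e^{i\theta})$, the envelope identity $\partial_{y_j}\log g(\xx^0(\yy))=-\log x^0_j(\yy)$, combined with the linearization of $\xx^0(\yy)$ above, gives $F_n(0)=n\log\binom{r+d-1}{d-1}-\tfrac12 Q(\zz)+O(|\zz|^3/\sqrt n)$, and the condition $|\zz|<n^{1/8}$ is precisely what makes this error $o(1)$ uniformly. The Hessian of $F_n/n$ at $\theta=0$ restricted to the tangent of $\Theta$ equals $\tilde H_0:=-(\alpha-\beta)(I_{d-1}+N_{d-1})$ up to $O(|\zz|/\sqrt n)$, and higher $\theta$-derivatives contribute $o(1)$ after rescaling and Gaussian integration. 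Using $\log(x^0_j/x^0_k)=\alpha'(z_j-z_k)/\sqrt n+O(|\zz|^2/n)$ and $1-e^w=-w(1+O(w))$, the Vandermonde factor in the integrand expands as
\begin{align*}
\prod_{j<k}\Bigl(1-\tfrac{x^0_j e^{i\theta_j}}{x^0_k e^{i\theta_k}}\Bigr)=(-1)^{d(d-1)/2}\,n^{-d(d-1)/4}\prod_{j<k}\bigl[\alpha'(z_j-z_k)+i(\phi_j-\phi_k)\bigr]\,(1+o(1)),
\end{align*}
uniformly on $L_\gamma$.

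The key observation for the final step is that the right-hand product is the Vandermonde $\prod_{j<k}(\tilde z_j-\tilde z_k)$ in the $d$ variables $\tilde z_j:=\alpha' z_j+i\phi_j$, antisymmetric under $\mathfrak S_d$-permutations of indices. Since the Gaussian measure $e^{-\frac12(\alpha-\beta)|\phi|^2}\,d\mathrm{Haus}(\phi)$ on $\CCS$ is $\mathfrak S_d$-invariant, its $\phi$-integral of $\prod_{j<k}(\tilde z_j-\tilde z_k)$ is an antisymmetric polynomial in $\zz$ of degree at most $d(d-1)/2$, hence equal to $C\cdot\prod_{j<k}\alpha'(z_j-z_k)$ for a constant $C$; letting $|\zz|\to\infty$ pins down $C=(2\pi\alpha')^{(d-1)/2}$, the Gaussian normalization on $\CCS$. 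Assembling the prefactor $d/(2\pi)^{d-1}$ of \eqref{eq:intReady}, the measure-conversion factor $1/\sqrt d$ from $d\theta_1\cdots d\theta_{d-1}=d\mathrm{Haus}/\sqrt d$ on $\CCS$, the rescaling Jacobian $n^{-(d-1)/2}$, the leading $n^{-d(d-1)/4}$ scale of the Vandermonde factor, and $e^{F_n(0)}=\binom{r+d-1}{d-1}^n e^{-Q(\zz)/2}(1+o(1))$ then yields the proposition. The main obstacle is the \emph{uniform} control of every Taylor remainder over $\zz\in L_\gamma$; the choice of the exponent $1/8$ is dictated precisely by the need to make the cubic Taylor error $|\zz|^3/\sqrt n$ tend to zero uniformly.
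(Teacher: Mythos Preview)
Your argument is correct and follows the same saddle-point scheme as the paper: insert the radii $\xx^0(\yy)$ into~\eqref{eq:intReady}, split $\Theta$ into a central window of size $n^{-2/5}$ and a tail, kill the tail with $|h_r(\xx^0e^{i\theta})|<h_r(\xx^0)$, and rescale $\theta=\phi/\sqrt n$ on the central part. Your use of the envelope identity $\partial_{y_j}\log g(\xx^0(\yy))=-\log x^0_j(\yy)$ to get $g(\xx^0)^n=\binom{r+d-1}{d-1}^n e^{-Q(\zz)/2}(1+o(1))$ is the same computation the paper does, expressed more compactly.

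The genuine difference is in how the Vandermonde factor and the error control are handled. The paper keeps $V(\xx^0\otimes e^{i\nu/\sqrt n})$ unexpanded, integrates against the Gaussian, and argues that the result is a scalar multiple of $\prod_{j<k}(x^0_k-x^0_j)$; because this Vandermonde is tiny (order $n^{-d(d-1)/4}$), the paper then has to run a careful expansion to all orders and invoke an antisymmetry in the shifted variables $\dy'=\dy-(1/n,2/n,\dots,d/n)$ to show that every term of the expansion is divisible by the Vandermonde, so that no sub-leading correction to $g^n$ can swamp the main term. You instead linearise $1-\frac{x^0_je^{i\theta_j}}{x^0_ke^{i\theta_k}}$ directly, extract $n^{-d(d-1)/4}$ explicitly, and recognise the remaining product as the Vandermonde in $\tilde z_j=\alpha' z_j+i\phi_j$; antisymmetry in $\zz$ under the $\mathfrak S_d$-invariant Gaussian then pins down the integral as $(2\pi\alpha')^{(d-1)/2}\alpha'^{\binom{d}{2}}\prod_{j<k}(z_j-z_k)$ exactly. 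This is cleaner: once the gap condition $\mathrm{gap}(\zz)>\gamma$ guarantees $|\alpha'(z_j-z_k)+i(\phi_j-\phi_k)|\ge\alpha'\gamma$, all multiplicative $(1+o(1))$ errors---from the Vandermonde linearisation, from the Hessian correction $O(|\zz|/\sqrt n)$, and from the cubic remainder $O(|\phi|^3/\sqrt n)$---stay uniformly small relative to the leading integral, and no higher-order bookkeeping is needed. The trade-off is that the paper's longer argument also yields the bound of Remark~\ref{rem:smallGaps} when the gap condition fails, which your direct expansion does not immediately give.
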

\begin{proof}
In the following proof, we say that a constant is \emph{uniform} if for each $\gamma$ that constant can be chosen uniformly for $\zz \in L_{\gamma}$.

We first let $\yy=(y_1,y_2,\dots, y_{d})$ with $y_i= \frac{r}{d} + \frac{z_i}{\sqrt{n}}$, so that $n\cdot y_i = \lambda_i$. We note that $\yy$ converges, uniformly, to $(r/d,r/d,\dots, r/d)$, so by the previous lemma for $n$ large enough there is a unique $\xx^0$ solution of $\eqref{eq:saddle2}$.
We can then choose the coordinates of $\xx^0$ as the radii of integration in formula~\eqref{eq:intReady}, to express $g_\lambda^{(r)}$ as:
$$
g_\lambda^{(r)}= I(y):=
\frac{d}{(2\pi)^{d-1}}\int_\Theta d\theta 
V(\xx^0\otimes e^{i\theta}) g(\xx^0\otimes e^{i\theta})^n
$$
where we use the notation $\xx^0\otimes e^{i\theta}:=(x_1^0 e^{i\theta_1}, x_1^0 e^{i\theta_1},x_2^0 e^{i\theta_2},\dots, x_{d}^0 e^{i\theta_{d}})$, and where as before $\Theta$ is the $(d-1)$-dimensional subspace of $[-\pi,\pi]^d$ formed by vector of null-sum (modulo $2\pi$). $\Theta$ can be viewed as a subspace of  $\CC_0$, that contains $(0, 0, \dots, 0)$ in its interior.

According to usual saddle-point heuristic we are going to partition  $\Theta$ into three subsets $A$, $B$, $C$, on which the contributions  will be studied separately. We use the notation $I(y)=I_A(y)+I_B(y)+I_C(y)$ to distinguish the three contributions to the integral of the three contours $A,B,C$. 
%Most of the wkr will take place in a neighboorhood of $\Theta=0$, so we first need to study the local expansions of $\xx$ at this point.

\noindent{$\bullet$ \textit{Computation of second derivatives.}}
We first evaluate the entries of the Hessian matrix from the explicit expression of $g(\xx)=h_r(\xx)/\prod_ix_i^{y_i}$.
Let $g_{i,j}=\left.\left(\frac{x_ix_j}{g(\xx)}\frac{\partial^2}{\partial x_i\partial x_j} g(\xx)\right) \right|_{\xx=\xx^0}$,
 where $\xx^0$ is related to $\yy$ by~\eqref{eq:saddle2}.
 We find:
$$
\left.\left(\frac{x_ix_j}{g(\xx)}\frac{\partial^2}{\partial x_i\partial x_j} g(\xx)\right)\right|_{\xx=\xx^0}
=
\left\{
\begin{array}{ll}
 \left(y_i (1-y_i) h_r +x_i^2 h_r^{ii} \right) /h_r& \mbox{ if } i= j,\\
\left(x_ix_j h_r^{ij} -y_iy_j h_r \right) /h_r & \mbox{ if } i\neq j, 
\end{array}
\right.
$$
with the same notation as in the previous proof.
% In particular $G=(g_{i,j})_{1\leq i,j\leq d-1}$ depends analytically of $\xx$, for $\yy$ in a neighborhood of $(r/d, r/d, \dots, r/d)\in \mathbb{R}^{d}$. 
Moreover, for $\yy=(r/d, r/d, \dots, r/d)$ and $\xx=(1,1,\dots, 1)$ we find the explicit expressions:
$$
g_{i,j} = \alpha  \mbox{ if } i=j \; ; \; g_{i,j} =  \beta  \mbox{ if } i\neq j,
$$
with $\alpha, \beta$ as above.
It follows that the eigenvalues of $G$ at $\yy=(r/d,r/d,\dots, r/d)$ 
are $(\alpha-\beta)\neq 0$ ($d-1$ times) and $(\alpha+(d-1)\beta)=0$ (once).
The eigenspace for the nonzero eigenvalue is $\CCS$, and for the zero eigenvalue it is $\CCP$.
%, and 
% and they are all nonzero and positive. I
%it follows that the eigenvalues of $G$ are nonzero and positive for $\yy$ in a small enough neighbourhood of $(r/d,r/d,\dots, r/d)$ contained in $V$ (call this neighborhood $W$).

%\noindent{$\bullet$ \textit{Contours of integration.}}
%We now fix $\yy$ in $W$, and we let $\xx^0=(x^0_1,x^0_2,\dots, x^0_{d})$ be the solution of~\eqref{eq:saddle2} granted by the previous lemma.
% We will evaluate the integral $I(y)$ on the saddle-point torus: 
%$$T:=\{ \xx^0\otimes e^{i\theta}, \theta\in(-\pi,\pi]^{d}\},$$
%where we use the notation $\xx^0\otimes e^{i\theta}:=(x_1^0 e^{i\theta_1}, x_1^0 e^{i\theta_1},x_2^0 e^{i\theta_2},\dots, x_{d}^0 e^{i\theta_{d}})$.

\noindent{$\bullet$ \textit{Local expansion of $\xx^0$.}}
First note that when $n$ tends to infinity, with the hypotheses of the proposition, $\yy$ tends to $(r/n, r/n, \dots, r/n)$ uniformly in the sense above. More precisely we have $|\frac{z_i}{\sqrt{n}}|\leq n^{1/8-1/2}=n^{-3/8}$.
Recall moreover that the Jacobian matrix $J(1,1,\dots,1)$ given by $\eqref{eq:jacob}$ acts as the scalar $\alpha-\beta=\frac{r(d+r)}{d(d+1)}$ on its eigenspace $\CCS$.
Therefore we have:
$$\xx^0 = (1,1,\dots,1) + n^{-1/2} \alpha'\zz + O(n^{-\frac{3}{4}}),
$$
 where $\alpha'=\frac{d(d+1)}{r(d+r)}$, and where the big-O is uniform.

\noindent{$\bullet$ \textit{First contour of integration (``large arguments'').}}
For $\epsilon>0$ we let $\Theta_\epsilon \subset \Theta$ be defined by
$$\Theta_\epsilon =\textstyle
 \left\{\theta \in \Theta:\; \forall i\in [1..d], \;|\theta_i|<\epsilon\right\}.
$$
%In other words, $T_\epsilon$ is the subset of $T$ on which the $\CCS$-component of the angle vector~$\theta$ is smaller than $\epsilon$, in $L^1$ norm.
% $T_\epsilon:=\{ \xx^0\otimes e^{i\theta}, \theta\in[-\epsilon,\epsilon]^{d}\}$.
%From the local expansion of $g$ at $\xx^0$,
From the definition of $g$ and from the fact that $\xx^0\in\mathbb{R}_{>0}^{d}$ 
there exists $\epsilon>0$ such that for all $\xx \in \Theta\setminus \Theta_\epsilon$, one has $|g(\xx)| \leq c'<g(\xx^0)$ for some constant $c'$ uniform for $\yy$ in a small enough neighbourhood  $(r/d,r/d,\dots, r/d)$ in $\CC_r$
(to see that, by the strict triangle inequality,  it is enough to show that if $\theta\in \Theta$ is nonzero, at least one monomial in the expansion of $h_r(\xx^0\otimes e^{i\theta})$ has a non-zero argument; but for the contrary to be true, all the $e^{i\theta_j}$ have to be equal to a common $r$-th root  of unity; in particular the $\theta_j$ have to be all equal, and since  they must sum to $0$ mod $2\pi$, we must have $\theta_j=\frac{2k\pi}{d}$ for some $k$; but from the restriction on $\theta_1$ in the definition of $\Theta$ this forces $k=0$).
We pick such an~$\epsilon$, and we define our first contour of integration we as $A:=\Theta\setminus \Theta_\epsilon$.
Now let $\yy$ be as in the statement of the proposition. For $n$ large enough, uniformly for $\zz \in L_{\gamma}$, $\yy$ is in that neighborhood. Moreover, $|V(\xx)|$ is 
bounded, so we obtain:
%at most polynomial in $n$.
%, and from the local expansion of $\xx^0$ above, we have 
%$|V(\xx^0)|> c'' n^{-\frac{1}{2} d(d-1)/2}$ for some uniform constant $c''$ (this is where we use the hypothesis on the gap). 
%Hence $V(\xx^0)$ is at most polynomially smaller than $V(\xx)$ for $\xx\in A$. This polynomial factor can be absorbed by changing slightly the exponential constants, and we find that for $\xx\in A$ we have:
$$
|V(\xx) g(\xx)^n| < c_2 (c_4)^n g(\xx^0)^n
$$
for uniform constants $c_2$ and $c_4<1$. It follows that 
\begin{align}\label{eq:IA}
I_A(y) < c_2 (c_4)^n g(\xx^0)^n.
\end{align}

\noindent{$\bullet$ \textit{Second contour of integration (``moderate arguments'').}}
We now consider the contour $C:=\Theta_{n^{-2/5}}$ with the same notation as above, and we define our second contour of integration as $B:=\Theta_\epsilon\setminus C$. The contour $B$ corresponds to points of $T$ whose arguments  are ``moderate'' (smaller than~$\epsilon$, but not all smaller than $n^{-2/5}$)\footnote{Classically in saddle-point methods, the choice of the exponent $-2/5$ is motivated by the fact that $(n^{-2/5})^2 \cdot n\rightarrow \infty$ but $(n^{-2/5})^3 \cdot n\rightarrow 0$.}. Recalling that the partial derivatives vanish at the saddle point $\xx^0$, we have for $\xx \in T_\epsilon$:
\begin{align}\label{eq:devg2}
g(\xx) = g(\xx^0) \left(1- \frac{1}{2}\sum_{1\leq i, j \leq d} g_{i,j} \theta_i\theta_j 
+ O(\epsilon^3)\right)
\end{align}
where the big-O is uniform for $\yy$ in a neighborhood of $(r/d, r/d, \dots, r/d)$. Moreover, up to reducing, if necessary, the value of $\epsilon$ picked in the previous step (which does not modify its conclusions), the maximum of the function $|g(\xx)|$ on $B$ is at most equal to $ g(\xx^0) (1-c' n^{-4/5})$ for some uniform constant $c'>0$.
% $|g(\xx^0\otimes e^{i(n^{-2/5}, \dots, n^{2/5})}|$. Therefore, using the local expansion~\eqref{eq:devg2}, we find that, for $x\in B$, we have:
$$
|g(\xx)|^n \leq c\cdot g(\xx^0)^n (1- c' n^{-4/5})^n \leq c\cdot g(\xx^0)^n \exp(-c'n^{1/5}),
$$ 
for uniform constants $c, c'>0$. 
Noting as before that $|V(\xx)|$ is bounded %at most polynomially large, 
%>c'n^{-\frac{1}{2}d(d-1)/2}$ is at most polynomially small, 
%we can absorb this polynomial correction by slightly changing the exponential constant and 
we find that
\begin{align}\label{eq:IB}
|I_B(y)| \leq c''g(\xx^0)^n\exp(-c_3 n^{1/5})
\end{align}
 where again all constants are uniform.

\noindent{$\bullet$ \textit{The dominating contour (``small arguments'').}}
We now look at the contour $C$, which as we will see is the only one to contribute to the result.
First, for $\xx\otimes e^{i\theta} \in C$, we have the local expansion, recalling that the first partial derivatives vanish at $\xx^0$:
$$
g(\xx^0\otimes e^{i\theta}) = g(\xx^0) \left(1- \frac{1}{2} \sum_{i,j} g_{i,j} \theta_i \theta_j+  O(n^{-6/5})\right).
$$
Now, since the entries of the matrix $G=(g_{i,j})_{1\leq i,j<d}$ depend analytically on $\xx^0$, we have:
$$
G = G_1 + O(n^{-3/8}),
$$
where $G_1:=1/\alpha' id + \beta N$, is the value of $G$ for $\xx^0=(1,1,\dots,1)$. 
It follows that 
$
g(\xx^0\otimes e^{i\theta}) = g(\xx^0) \left(1- \theta G_1 \theta^T + O(n^{-6/5})\right).
$
Note that, since $\theta \in \Theta$, we have $\theta N =0 $ and therefore
$
g(\xx^0\otimes e^{i\theta}) = g(\xx^0) \left(1- \alpha'^{-1}\theta \theta^T + O(n^{-6/5})\right).
$

Similarly, from the local expansion of $\xx^0$ we have
$g(\xx^0)=g(1,1,\dots,1) (1- \frac{1}{2n} \alpha' \zz \zz^T + O(n^{-9/8}))$, 
% where $J_1=(\alpha -\beta) id + \beta N$ is the Jacobian matrix at $(1,1,\dots, 1)$ computed before.
%We note that $J_1=G_1$, and since $\sum_i z_i=0$ we have $\zz N = 0$, and it 
\textit{i.e.} $g(\xx^0) =g(1,1,\dots,1) (1- \frac{1}{2n} Q(\zz) + O(n^{-9/8}))$ with $Q(\zz)$ as in the statement of the proposition.
Therefore, we have\footnote{Here we see the explanation for the choice of the exponent $n^{1/8}$ for the bound on $\max_i |z_i|$. More generally, if we had imposed $\max_i |z_i| < n^{\kappa}$, the term $O(n^{-9/8})$ would have become $O(n^{3(\kappa-\frac{1}{2})})$, and we see that for this term to be a $o(n^{-1})$, we need $\kappa<\frac{1}{6}$ (so $\frac{1}{8}$ was a safe choice). This critical exponent $\frac{1}{6}=\frac{2}{3}-\frac{1}{2}$ is classical in ``moderate deviations'' results, where ``central limit'' estimates are often valid up to any region of width $o(n^{2/3})$, and not only $O(n^{\frac{1}{2}})$.}
$$
g(\xx^0)^n = e^{-\frac{1}{2}Q(\zz)} (1+o(1)),
$$
with a \emph{uniform} little-o.
Recalling that $g(1,1,\dots,1)={d+r-1\choose d-1}$, the contribution of the contour $C$ to the integral can thus be rewritten:
\begin{align*}
I_C(y)&=\frac{d \cdot {d+r-1 \choose d-1}^ne^{-\frac{1}{2}Q(\zz)}}{(2\pi)^{d-1}} 
\int_{\theta_1, \theta_2, \dots, \theta_d \atop {|\theta_i|<n^{-2/5}\atop \theta_1+\theta_2+\dots +\theta_d=0}}
V(\xx^0 \otimes e^{i\theta}) 
\left( 1 - 1/\alpha'\theta\theta^T
+ O(n^{-6/5})\right)^n (1+o(1))d\theta.
\end{align*}
Therefore, making the change of variables $\theta_j = \nu_i /\sqrt{n}$, the last integral is equal to:
\begin{align}\label{eq:IC2}
\frac{d\cdot n^{-\frac{d-1}{2}}{d+r-1 \choose d-1}^ne^{-\frac{1}{2}Q(\zz)}}{(2\pi)^{d-1}} 
\int_{\nu_1, \nu_2, \dots, \nu_d \atop {|\nu_i|<n^{1/10}\atop \nu_1+\nu_2+\dots +\nu_d=0}}
V(\xx^0 \otimes e^{i\nu/\sqrt{n}}) 
\left( 1 - \frac{1}{\alpha'n}\nu\nu^T
+ O(n^{-6/5})\right)^n (1+o(1))d\nu.
\end{align}
Now note that we have the uniform estimate 
$$
\left( 1 - \frac{1}{\alpha'n} \nu\nu^T
+ O(n^{-6/5})\right)^n = e^{-1/\alpha' \nu\nu^T} (1+o(1)).
$$
We are going to inject this estimate in the integral (and justify the validity of this operation in the next paragraph). The quantity~\eqref{eq:IC2} becomes, up to a $(1+o(1))$ factor:
\begin{align*}
\frac{d\cdot n^{-\frac{d-1}{2}}{d+r-1 \choose d-1}^ne^{-\frac{1}{2}Q(\zz)}}{(2\pi)^{d-1}} 
\int_{\nu_1, \nu_2, \dots, \nu_d \atop {|\nu_i|<n^{1/10}\atop \nu_1+\nu_2+\dots +\nu_d=0}}
V(\xx^0 \otimes e^{i\nu/\sqrt{n}}) 
e^{-1/\alpha' \nu\nu^T} d\nu, 
\end{align*}
which up to exponentially small terms, is equal to:
\begin{align*}
\frac{d\cdot n^{-\frac{d-1}{2}}{d+r-1 \choose d-1}^ne^{-\frac{1}{2}Q(\zz)}}{(2\pi)^{d-1}} 
\int_{\nu_1, \nu_2, \dots, \nu_d \atop {\nu_1+\nu_2+\dots +\nu_d=0}}
V(\xx^0 \otimes e^{i\nu/\sqrt{n}}) 
e^{-1/\alpha' \nu\nu^T} d\nu.
\end{align*}
If we view $(x^0_1,x^0_2,\dots, x^0_d)$ as \emph{formal} variables, then the last quantity is antisymmetric in the $x_i$ (since the Gaussian measure in the exponential is symmetric), therefore it is a scalar multiple of the Vandermonde determinant $\prod_{1\leq j < k\leq d}(x^0_k-x^0_j)$. Moreover the multiplicative constant is easily determined by looking at the coefficient of $(x^0_1)^{d-1}$, and, by integrating the Gaussian density, it is equal to $\frac{1}{\sqrt{d}}(\sqrt{2\pi})^{d-1}\alpha'^{\frac{d-1}{2}}(1+o(1))$, where the $\frac{1}{\sqrt{d}}$ comes from the projection of the Lebesgue measure from $\mathbb{R}^{d-1}$ to $\CCS$.  
Therefore the last quantity is equal to:
\begin{align*}
\frac{\sqrt{d}n^{-\frac{d-1}{2}}{d+r-1 \choose d-1}^ne^{-\frac{1}{2}Q(\zz)}\alpha'^{\frac{d-1}{2}}}{(2\pi)^{\frac{d-1}{2}}} 
\prod_{1\leq j < k\leq d}(x^0_k-x^0_j) (1+o(1))
\end{align*}
which from the local expansion of $\xx^0$ is equal, up to a uniform $(1+o(1))$ factor, to:
\begin{align}\label{eq:toBeDiscussed}
\frac{\sqrt{d}n^{-\frac{d-1}{2}-\frac{d(d-1)}{4}}{d+r-1 \choose d-1}^ne^{-\frac{1}{2}Q(\zz)}\alpha'^{\frac{d-1}{2}+\frac{d(d-1)}{2}}}{(2\pi)^{\frac{d-1}{2}}} 
\prod_{1\leq j < k\leq d}(z^0_k-z^0_j),
\end{align}
which is the result announced in the statement of the proposition. 
%Since the integrals $I_A(y)$ and $I_B(y)$ are exponentially smaller, as are the tails of the Gaussian integrals, it only 
It remains to justify that the approximations we have made 
%just after writing \eqref{eq:IC2}
 were correct.
%the approximation of $g(\xx^0 \otimes e^{i\theta})$ by $e^{-\frac{1}{2}Q(\zz)}$ in \eqref{eq:IC2} leads to a correct asymptotics for $I_C(y)$.
%It remains to justify that the approximation we have made was correct.
Consider the integral in Formula \eqref{eq:IC2}.
This formula was obtained from the definition of $I_C(y)$ by replacing $g(\xx^0\otimes e^{i\theta})^n$ by its first order approximation inside the integral. It is not obvious that the final result of this operation gives an equivalent of $I_C(y)$, since in the course of the computation, the Vandermonde $\prod_{j<k} (x^0_k-x^0_j)$ contributed to a (very small) factor of $O(n^{-d(d-1)/4})$ to \eqref{eq:toBeDiscussed}. The problem is that, \textit{a priori}, a small correction term in the expansion of $g(\xx^0\otimes e^{i\theta})^n$ could lead to a higher contribution in the end, if this term was not driven down by a Vandermonde or some other small factor.
However, let us prove now that this is not the case. 
The idea is that the antisymmetry property that we have used is valid, in an approximate sense, to all orders.

\newcommand{\dy}{\mathbf{\delta{y}}}

More precisely, note that $g(\xx^0\otimes e^{i\theta})$, viewed as a function of $\theta$ for $\xx^0$ fixed
%and $\theta$ (with $\xx^0$ linked to $\yy$ by \eqref{eq:saddle2}) 
is analytic at $\theta=0$.
%More precisely, since $g(\xx)$ is analytic at $\xx=(1,1,\dots,1)$,
By composition, the same is true for $\ln \frac{g(\xx^0\otimes e^{i\theta})}{g(\xx^0)}$.
Therefore, since $\theta\in C$ converge at controlled speed to $0$
(namely, at speed $n^{-2/5}$) we can write a Taylor expansion of $\ln g(\xx^0\otimes e^{i\theta})$ with a remainder term which is as small as we want. 
That is, for any $L\geq 1$, we can get an expansion of the form:
%We can then expand $g(\xx^0\otimes e^{i\theta})^n$ and get
$$
\ln \frac{g(\xx^0\otimes e^{i\theta})}{g(\xx^0)} = 
 -\theta G\theta^T + \sum_{k\geq 3\atop \frac{2}{5}k<L} P_{k}(\theta,\xx^0) +O(n^{-L})
$$
where $P_k(\theta,x)$ is a homogeneous polynomial of degree $k$ in $\theta$, whose coordinates depend analytically on $\xx^0$, where $G=G(\xx^0)$ is the Hessian, and where the constant in the big-O is absolute (\textit{i.e.} independant of $\xx^0$ in a neighborhood of $(1,1,\dots, 1)$).
Note that we have used that first derivatives vanish at $\xx^0$, by~\eqref{eq:saddle2}.
Thererefore, letting $\nu=\theta\sqrt{n}$, we have 
$$
 n \cdot \ln \frac{g(\xx^0\otimes e^{i\theta})}{g(\xx^0)} = 
 -\nu G \nu^T + \sum_{k\geq 3\atop \frac{2}{5}k\leq L} n^{1-\frac{k}{2}} P_{k}(\nu,\xx^0) +O(n^{-L-1}).
$$
We can thus substitute this in the definition of $I_C(y)$ and get:
\begin{align*}
I_C(y) &=  d\cdot g(\xx^0)^n \cdot \frac{n^{-\frac{d-1}{2}}}{(2\pi)^{d-1}} 
\int_{\nu_1, \nu_2, \dots, \nu_d \atop {|\nu_i|<n^{1/10}\atop \nu_1+\nu_2+\dots +\nu_d=0}}
V(\xx^0\otimes e^{i\nu/\sqrt{n}})
e^{\left(
-\nu G \nu^T + \sum\limits_{k\geq 3\atop \frac{2}{5}k<L} n^{1-\frac{k}{2}} P_{k}(\nu,\xx^0) +O(n^{-L})
\right)}\\
&=\frac{d\cdot n^{-\frac{d-1}{2}}}{(2\pi)^{d-1}} g(\xx^0)^n\cdot
\int_{\nu_1, \nu_2, \dots, \nu_d \atop {|\nu_i|<n^{1/10}\atop \nu_1+\nu_2+\dots +\nu_d=0}}
%V(\xx^0\otimes e^{i\nu/\sqrt{n}})
e^{-\nu G \nu^T} \left( \sum_{0\leq k<2L} n^{-k/2}Q_{k}(\nu,\xx^0) +O(n^{-L}) \right)
\end{align*}
for some polynomials $Q_k$ in $\nu$
% that are homogeneous of degree $k$ in $\nu$ and 
whose entries depend analytically on $\xx^0$ in a neighbourhood of $(1,1,\dots,1)$, and where in the second equality we have also replaced the Vandermonde by its Taylor expansion in $\theta$. We can thus complete the Gaussian tails and get:
\begin{align*}
I_C(y) &= g(\xx^0)^n E(n) +\\ &\frac{d\cdot n^{-\frac{d-1}{2}}}{(2\pi)^{d-1}} g(\xx^0)^n\cdot
\int_{\nu_1, \nu_2, \dots, \nu_d \atop { \nu_1+\nu_2+\dots +\nu_d=0}}
e^{-\nu G \nu^T} \left( \sum_{0\leq k<2L} n^{-\frac{k}{2}} Q_{k}(\nu,\xx^0) +O(n^{-L}) \right)
\end{align*}
where $E(n)$ is an exponentially decreasing term. Integrating the Gaussian density term by term (and recalling that the big-O is absolute), we obtain:
\begin{align*}
I_C(y) &= g(\xx^0)^n E(n) +\frac{\sqrt{d}\cdot n^{-\frac{d-1}{2}}}{(2\pi)^{d-1}} g(\xx^0)^n\cdot \sqrt{\det G}
\left( \sum_{0\leq k<2L} n^{-\frac{k}{2}} R_{k}(\xx^0) +O(n^{-L}) \right)
\end{align*}
for quantities $R_k(\xx^0)$ that depend analytically on $\xx^0$.
Now, the quantities $g(\xx^0)$ and $\sqrt{\det G}$ also depend analytically on  $\xx^0$ near $(1,1,\dots,1)$.
 We can thus replace each of these quantities by a Taylor expansion up to a large order, and we can ensure that the error term is at most $O(n^{-L})$, uniformly.
We prefer to write this Taylor expansion in terms of $\yy$ tending to $(r/d,r/d,\dots r/d)$. We thus obtain an expression of the form:
\begin{align}
I(y) &= {r+d-1\choose d-1} ^n E(n) +\frac{n^{-\frac{d-1}{2}}}{(2\pi)^{d-1}} {r+d-1 \choose d-1}^n e^{-\frac{1}{2}Q(\zz)}\left( \sum_{0\leq k<2L} n^{-\frac{k}{2}} S_{k}(\dy) +O(n^{-L}) \right),\label{eq:lastExpr}
\end{align}
where $\dy=(y_1-\frac{r}{d}, y_2-\frac{r}{d}, \dots, y_d-\frac{r}{d})$, where the $S_k(\dy)$ are polynomials in $\dy$,  and where the exponentially decreasing term $E(n)$ has been, if necessary, modified, to take into account the contributions of $I_A(y)$ and $I_B(y)$.

Now comes the crucial observation. From its original definition, we note  that the integral $I(y)$ is antisymmetric in  the coordinates of $\dy':=(\delta y_1-\frac{1}{n},\delta y_2-\frac{2}{n},\dots, \delta y_d-\frac{d}{n})$. We can transform \eqref{eq:lastExpr} into a similar-looking expansion in terms of $\dy'$ instead of $\dy$, up to replacing the polynomials $S_k$ by other polynomials $S_k'$ (we just substitute $y_i=y'_i+i/n$, and we collect the powers of $n$). Since $E(n)$ is exponentially small, the antisymmetry implies that each \emph{polynomial} $S'_k(\dy')$ is divisible by the Vandermonde:
$$
\prod_{1\leq i < j \leq d} (\dy'_i-\dy'_j).
$$
This proves that the term $n^{-\frac{k}{2}} S_k'(\dy')$ in the sum is a 
$$O\left( \prod_{1\leq j<k \leq d}(z_k-z_j) n^{-d(d-1)/4-k/2}\right).$$ Now, tracking the computations, we see that the term corresponding to $k=0$, that consists in selecting the first order term in all the successive expansions, is precisely the term that we have considered when deriving~\eqref{eq:toBeDiscussed}, and gives a contribution of $\prod_{1\leq j<k \leq d}(z_k-z_j) n^{-d(d-1)/4}$ times a constant factor. Therefore it is strictly larger than all other terms in the expansion, and since for $\zz \in L_\gamma$ the product $\prod_{1\leq j<k \leq d}(z_k-z_j)$ is bounded away from $0$, it is also strictly larger than the remainder $O(n^{-L})$ provided we chose $L$ large enough. We thereby obtain a full proof that $\eqref{eq:toBeDiscussed}$ is indeed a valid approximation of $I(y)$.
\end{proof}

\begin{remark}\label{rem:smallGaps}
If we do not assume that $\zz \in L_{\gamma}$, the previous proposition fails because we can't control uniformly the behaviour of the Vandermonde $V(\xx^0)$ at the saddle-point. However, the proof of the previous proposition also shows the following.
Let $\zz$ such that $\sum_i z_i=0$ and $\max_i |z_i|<n^{1/8}$, 
and let $\lambda=(\lambda_1, \lambda_2,\dots, \lambda_d)$ with $\lambda_i = \frac{rn}{d}+z_i \sqrt{n}$. Assume that $\zz$ is such that $\lambda$ is an integer partition (of $rn$).
%and let $\yy=(y_1,y_2,\dots, y_{d-1})$ with $y_i= \frac{r}{d} + \frac{z_i}{\sqrt{n}}$
Then when $n$ tends to infinity, we have with the notation of the previous proposition:
$$
\left| g_\lambda^{(r)} - I_C(y) 
\right|
 \leq 
 c \exp(-c'n^{1/5}){r+d-1 \choose d-1}^n, 
$$
with $\displaystyle |I_C(y)|\leq c {d+r-1 \choose d-1}^n n^{-\frac{(d+2)(d-1)}{4}}e^{-\frac{1}{2}Q(\zz)}  P(|\zz|) $, for uniform constants $c, c'$ and for some polynomial $P$.
\end{remark}
\begin{proof}
We split the integral $I(y)=I_A(y)+I_B(y)+I_C(y)$ as in the proof of the previous proposition. The fact that when $n$ tends to infinity $y_i$ tends to $(r/n,r/n,\dots, r/n)$ is still valid \emph{uniformly}, so all the local estimates we used in the proof are still uniformly valid.
Now, the estimates of $I_A(y)$ and $I_B(y)$ remain valid. As for $I_C(y)$, the last argument of the proof does not work, since the Vandermonde can be arbitrarily small.  
However, the same proof shows that we can get an upper bound on $I_C(y)$ by replacing the quantity $\prod_{1\leq j\leq k\leq d} (z_k-z_j)$ in the result by $\prod_{1\leq j\leq k\leq d} (z_k-z_j+O(\frac{1}{\sqrt{n}}))$. The modulus of this quantity is clearly polynomially bounded in $\zz$.
\end{proof}

\section{Contribution to the sum}

We can now analyse the wanted sum. Recall that we have 
$$
A_{d+1,r}(n) 
= \sum_{\lambda \vdash rn \atop \ell(\lambda)\leq d} f_\lambda g_\lambda^{(r)}
= \sum_{\lambda \vdash rn \atop \ell(\lambda)\leq d} g_\lambda^{(1)} g_\lambda^{(r)}.
$$
We split this sum into several parts. Let $\Lambda$ be the set of partitions of $rn$ into at most $d$ parts. We fix a small parameter $\gamma>0$ we let:
$$
\Lambda_1 = \{\lambda \in \Lambda: \;  |\lambda_i-\frac{rn}{d}|\leq n^{1/2+1/8} \;\mbox{ and }\; \forall i\neq j, |\lambda_i-\lambda_j| \geq \gamma \sqrt{n}\}
$$ 
$$
\Lambda_2 = \{\lambda \in \Lambda: \;  \exists i: |\lambda_i-\frac{rn}{d}| >n^{1/2+1/8} \}
$$
$$
\Lambda_3 = \{\lambda \in \Lambda: \;  |\lambda_i-\frac{rn}{d}|<n^{1/2+1/8} \; \exists i\neq j, |\lambda_i-\lambda_j| < \gamma\sqrt{n}\}
$$ 
Note that $\Lambda=\Lambda_1\uplus \Lambda_2 \uplus\Lambda_3$.

\medskip

\noindent $\bullet$ \textit{Contribution of $\Lambda_2$}.
We first analyse the sum over $\Lambda_2$. Let $\lambda\in \Lambda_2$. The number of semi-standard Young Tableaux of shape $\lambda$ and content $r^n$ (resp $1^{rn}$) is at most the number of ways of placing $r$ (resp $1$) instiguishable balls of label $i$ for each $i=1..n$ (resp. $i=1..rn$) into urns of sizes $\lambda_1, \lambda_2, \dots, \lambda_d$. This number is ${r+d-1 \choose d-1}^n$ (resp $d^{rn}$) times the probability that a uniform random assignement of the balls into $d$ urns ends up with urns of size $\lambda_1, \lambda_2, \dots, \lambda_d$. By Chernoff's bound, for $\lambda \in \Lambda_2$, this probability is smaller than $\exp(-c n^{1-2/8})$ for some $c>0$.
Since the number of elements in $\Lambda$ is only polynomial in $n$, we deduce that the contribution of the set $\Lambda_2$ to the sum is bounded as follows:
$$
\sum_{\lambda \vdash rn \atop \lambda\in\Lambda_2} f_\lambda g_\lambda^{(r)} \leq c' \left[d^{r} {r+d-1 \choose d-1}\right]^n \exp(-c'' n^{3/4})
$$
for constants $c', c''>0$.

\noindent $\bullet$ \textit{Contribution of $\Lambda_1$ and $\Lambda_3$}.
For $\lambda$ in $\Lambda_1$, we can apply Proposition~\ref{prop:uniform} to get an estimate of the number $g_\lambda^{(r)}$ (and apply it with $r=1$ and replacing $n$ by $rn$ to get an estimate for $f_\lambda$). We find:
\begin{align*}	
f_\lambda g_\lambda^{(r)}
= d \left(\frac{d^2(d+1)}{r(d+r)}\right)^{\frac{(d+1)(d-1)}{2}}
\frac{1}{(2\pi)^{d-1}}
\prod_{1<i<j\leq d}(z_i-z_j)^2 e^{-R(\zz)}\\
\times n^{-\frac{(d+2)(d-1)}{2}}
r^{-\frac{(d-1)(d+1)}{2}}
\left[d^r {r+d-1 \choose d-1}\right]^n
(1+o(1))
\end{align*}
where $\zz=(z_1,z_2,\dots, z_{d})$ with $\lambda_i = \frac{rn}{d}+\sqrt{n} z_i$, $R(\zz)=\left(\frac{d}{r}+\frac{d(d+1)}{r(d+r)}\right)\sum_{i=1}^d z_i^2$.
 Note the power of $r$, that accounts from the fact that to apply Proposition~\ref{prop:uniform} with $rn$ instead of $n$, we have to scale the $z_i$ by $1/\sqrt{r}$ (hence a factor of $r^{-\frac{(d+2)(d-1)}{4}}$ coming from the power of $n$, and a factor of $r^{-\frac{d(d-1)}{4}}$ coming from the Vandermonde, which gives $r^{-\frac{(d-1)(d+1)}{2}}$ in total ).
%and where $c(\zz)$ tends to $0$ uniformly for $\max_i |z_i|$ on compact sets (by Corollary~\ref{cor:bounded}). Moreover note that $c(\zz)$ is \emph{uniformly} bounded by Corollary~\ref{cor:unbounded}.
Therefore 
\begin{align*}	
f_\lambda g_\lambda^{(r)}
\left( d \left(\frac{d^2(d+1)}{r(d+r)}\right)^{\frac{(d+1)(d-1)}{2}}
\frac{n^{-\frac{(d+2)(d-1)}{2}}
r^{-\frac{(d-1)(d+1)}{2}}
}{(2\pi)^{d-1}}
\left[d^r {r+d-1 \choose d-1}\right]^n
\right)^{-1}\\ 
\rightarrow\prod_{1<i<j\leq d}(z_i-z_j)^2 e^{-R(\zz)}.
\end{align*}
Moreover, by the uniformity of the little-o in Proposition~\ref{prop:uniform}, this convergence is dominated. 

On the other hand, for $\lambda \in \Lambda_3$, we can use Remark~\ref{rem:smallGaps} to get:
\begin{align*}	
f_\lambda g_\lambda^{(r)} 
\left( d \left(\frac{d^2(d+1)}{r(d+r)}\right)^{\frac{(d+1)(d-1)}{2}}
\frac{n^{-\frac{(d+2)(d-1)}{2}}}{(2\pi)^{d-1}}
\left[d^r {r+d-1 \choose d-1}\right]^n
\right)^{-1} 
\leq P(\zz) e^{-R(\zz)} + E,
\end{align*}
where $E$ is an exponentially small term, that we can thus disregard.

Since this is true for any $\gamma>0$ small enough, we easily deduce from the  the dominated convergence theorem\footnote{Apply the dominated convergence theorem for $\gamma$ fixed to see that the sum we want to evaluate is close to the integral in the right-hand-side up to a $O(\gamma)$ factor, and then let $\gamma$ tend to zero.} that:
\begin{align*}
\left(
\frac{d}{(2\pi)^{d-1}}
 \left(\frac{d^2(d+1)}{r(d+r)}\right)^{\frac{(d+1)(d-1)}{2}}
n^{-\frac{(d+2)(d-1)}{2}}
\left[d^r {r+d-1 \choose d-1}\right]^n
r^{-\frac{(d-1)(d+1)}{2}}
\right)^{-1} \\
\times
n^{\frac{1-d}{2}}\sum_{\lambda\in \Lambda_1\uplus\Lambda_3}f_\lambda g_\lambda^{(r)} 
\rightarrow
%O(\gamma) + 
\int_{\nabla}
\prod_{1\leq i<j \leq d}
(z_i -z_j)^2 \exp \left(- \frac{d(2d+r+1)}{2r(d+r)}\sum_{i=1}^d z_i^2\right) 
\end{align*}
where the $(d-1)$-dimensional integral is taken over the set $\nabla:=\{z_1+z_2+\dots+z_d=0, \; z_1<z_2<\dots<z_d\}$.
% minus the exceptionnal set $S_\gamma:=\{\exists i,j, |z_i-z_j|\leq \gamma\}$.
%We can now let $\gamma$ tend to zero and obtain:
%\begin{align*}
%\left(
%\frac{d}{(2\pi)^{d-1}}
% \left(\frac{d^2(d+1)}{r(d+r)}\right)^{\frac{(d+1)(d-1)}{2}}
%n^{-\frac{(d+2)(d-1)}{2}}
%\left[d^r {r+d-1 \choose d-1}\right]^n
%r^{-\frac{(d-1)(d+1)}{2}}
%\right)^{-1}\\
%\times n^{\frac{1-d}{2}}
%\sum_{\lambda\in \Lambda_1\uplus\Lambda_3}f_\lambda g_\lambda^{(r)}
%\rightarrow
%\int_{\nabla}
%\prod_{1\leq i<j \leq d}
%(z_i -z_j)^2 \exp \left(- \frac{d(2d+r+1)}{2r(d+r)}\sum_{i=1}^d z_i^2\right) 
%\end{align*}
This integral was evaluated in Regev's original paper~\cite{Regev}, who showed that:
$$
\int_{\nabla}
\prod_{1\leq i<j \leq d}
(z_i -z_j)^2 \exp \left(-\sum_{i=1}^d z_i^2\right)
=\frac{1}{d!}\sqrt{\frac{1}{\pi d}}
(2\pi)^{\frac{d}{2}}2^{-\frac{d^2}{2}} \prod_{i=1}^d i!.
$$
Making the change of variable $z_i =\sqrt{\frac{d(2d+r+1)}{2r(d+r)}}z'_i$ this enables us to evaluate the integral we need.
Putting this together with the previous estimate we finally have proved:
\begin{align*}
\sum_{\lambda\in \Lambda_1\uplus\Lambda_3}f_\lambda g_\lambda^{(r)} 
\sim
\frac{d}{(2\pi)^{d-1}}
 \left(\frac{d^2(d+1)}{r(d+r)}\right)^{\frac{(d+1)(d-1)}{2}}
n^{-\frac{(d+1)(d-1)}{2}}
\left[d^r {r+d-1 \choose d-1}\right]^n
r^{-\frac{(d-1)(d+1)}{2}}\\
\times \frac{1}{d!}\sqrt{\frac{1}{\pi d}}
(2\pi)^{\frac{d}{2}}2^{-\frac{d^2}{2}} \prod_{i=1}^d i!
\left(\frac{d(2d+r+1)}{2r(d+r)}\right)^{-\frac{d-1}{2}-\frac{d(d-1)}{2}}
\end{align*}
Therefore we finally have proved that when $n$ tends to infinity we have:
$$
A_{d+1,r}(n) \sim
C_{d+1,r}  
n^{-\frac{(d+1)(d-1)}{2}}
\left[d^r {r+d-1 \choose d-1}\right]^n
$$
with 
\begin{align*}
C_{d+1,r}
&=
\frac{\sqrt{d}\cdot \prod_{i=1}^{d-1} i!}{(2\pi)^{\frac{d}{2}-\frac{1}{2}}}
 \left(\frac{d^2(d+1)}{r(d+r)}\right)^{\frac{(d+1)(d-1)}{2}}
r^{-\frac{(d-1)(d+1)}{2}}
2^{-\frac{d^2}{2}} 
\left(\frac{d(2d+r+1)}{2r(d+r)}\right)^{-\frac{(d+1)(d-1)}{2}}\\
&=
\frac{\sqrt{d}\cdot \prod_{i=1}^{d-1} i!}{(2\pi)^{\frac{d}{2}-\frac{1}{2}}}
\left(\frac{d(d+1)}{r(2d+r+1)}\right)^{\frac{(d-1)(d+1)}{2}}
\end{align*}
which is what we wanted to prove!
\bibliographystyle{alpha}
\bibliography{biblio}

%Wie first note that $\xx=\yy$ is a saddle-point of the integral. 
%More precisely, for each $1\leq i\leq d$ we have:
%$$
%\left. \frac{\partial}{\partial x_i} g(\xx) \right|_{\xx=\yy} =0.
%$$

\end{document}